\theoremstyle{plain}
\newtheorem{thm}{\protect\theoremname}
\theoremstyle{plain}
\newtheorem{prop}[thm]{\protect\propositionname}
\newenvironment{proof}[1][\protect\proofname]{\par
	\normalfont\topsep6\p@\@plus6\p@\relax
	\trivlist
	\itemindent\parindent
	\item[\hskip\labelsep\scshape #1]\ignorespaces
}{%
	\endtrivlist\@endpefalse
}
\providecommand{\proofname}{Proof}
\providecommand{\propositionname}{Proposition}
\providecommand{\theoremname}{Theorem}
\begin{document}
\title{Optimal separator for an ellipse\\
Application to localization}
\author{Luc Jaulin}
\institution{Lab-Sticc, ENSTA-Bretagne}

\maketitle
\textbf{Abstract}. This paper proposes a minimal contractor and a
minimal separator for an ellipse in the plane. The task is facilitated
using actions induced by the hyperoctahedral group of symmetries.
An application related to the localization of an object using multiple
sonars is proposed. 

\section{Introduction}

Consider the quadratic function 
\begin{equation}
f(\mathbf{q},\mathbf{x})=q_{0}+q_{1}x_{1}+q_{2}x_{2}+q_{3}x_{1}^{2}+q_{4}x_{1}x_{2}+q_{5}x_{2}^{2}\label{eq:fqdex}
\end{equation}
where $\mathbf{q}=(q_{0},\dots,q_{5})$ is the parameter vector and
$\mathbf{x}=(x_{1},x_{2})$ is the vector of variables. Equivalently,
we can write the function in a matrix form: 
\begin{equation}
f(\mathbf{q},\mathbf{x})=\mathbf{x}^{\text{T}}\cdot\left(\begin{array}{cc}
q_{3} & \frac{1}{2}q_{4}\\
\frac{1}{2}q_{4} & q_{5}
\end{array}\right)\cdot\mathbf{x}+(q_{1}\,\,\,\,q_{2})\cdot\mathbf{x}+q_{0}.
\end{equation}

The zeros of this quadratic function is, in general, a conic section
(a circle or other ellipse, a parabola, or a hyperbola). Define the
set
\begin{equation}
\mathbb{X}=\left\{ (x_{1},x_{2}|f(\mathbf{q},\mathbf{x})\leq0\right\} .
\end{equation}
We will assume here that the square matrix involved in the matrix
form has positive eigen values. In this case $\mathbb{X}$ is an ellipse.
In this paper, we propose an interval-based method \citep{Moore79}
to generate an optimal separator \citep{DesrochersEAAI2014} for the
set $\mathbb{X}$. This separator will be used to generate an inner
and an outer approximations for $\mathbb{X}$. As an application,
we will consider the problem of the localization of an object using
3 sonars.

This paper is organized as follows. Section \ref{sec:Symmetries}
introduces the notion of symmetries that will be used in the construction
of the separators. Section \ref{sec:Separator} builds the separator
for the ellipse. Section \ref{sec:Application} illustrates the use
of the separator to approximate the set of position for an object
using three sonars. Section \ref{sec:Conclusion} concludes the paper.

\section{Symmetries\label{sec:Symmetries}}

Define an equation of the form
\[
f(\mathbf{q},\mathbf{x})=0.
\]

Two transformations $\varepsilon$ and $\sigma$ are conjugate with
respect to $f$ if 
\[
f(\varepsilon(\mathbf{q}),\sigma(\mathbf{x}))=0\Leftrightarrow f(\mathbf{q},\mathbf{x})=0.
\]
Transformations that will be consider are limited to the \emph{hyperoctahedral
group} $B_{n}$ \citep{Coxeter99} which is the group of symmetries
of the hypercube $[-1,1]^{n}$ of $\mathbb{R}^{n}$. The group $B_{n}$
corresponds to the group of $n\times n$ orthogonal matrices whose
entries are integers. Each line and each column of a matrix should
contain one and only one non zero entry which should be either $1$
or $-1$. Figure \ref{fig: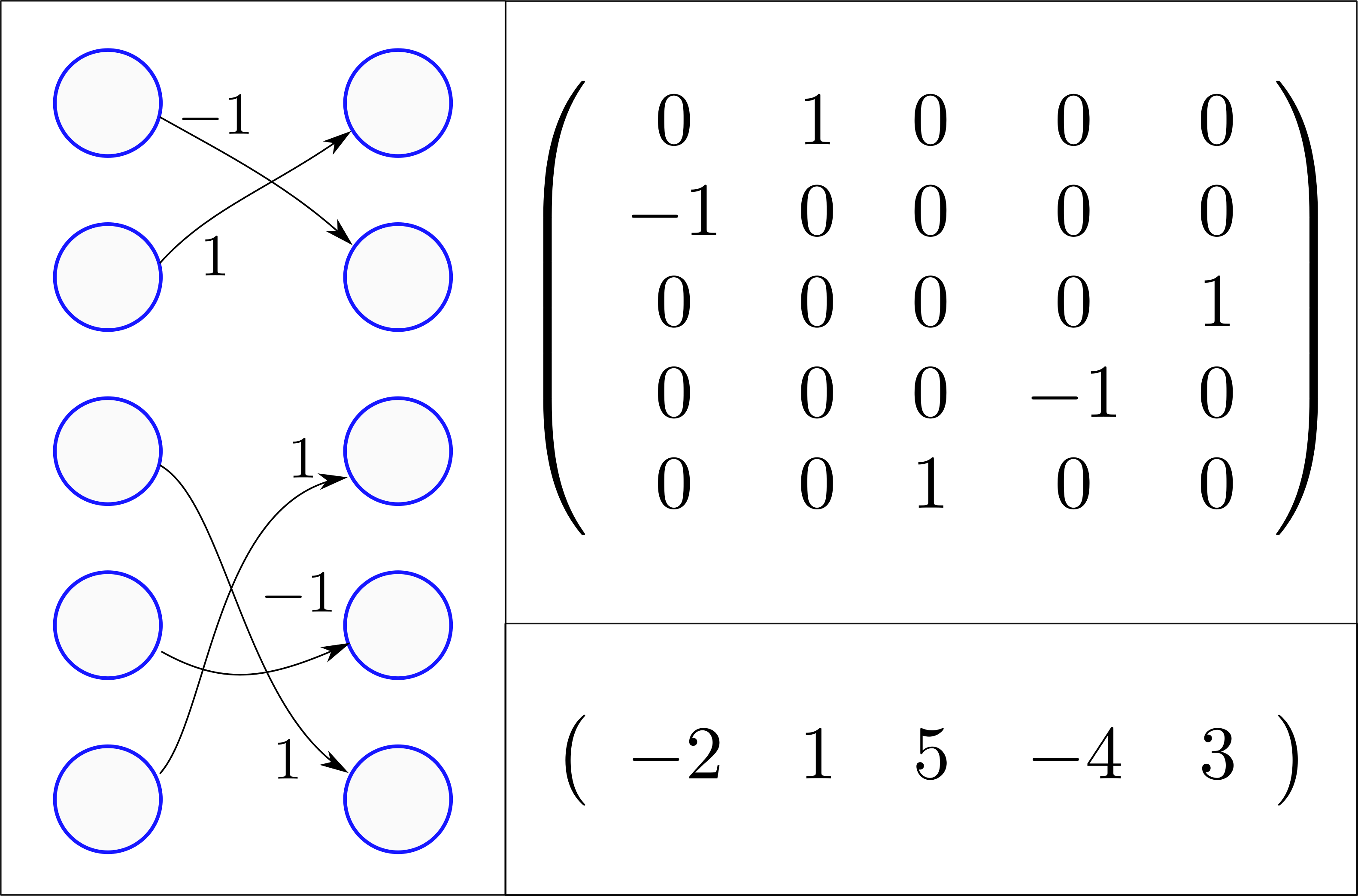} shows different notations
usually considered to represent a symmetry $\sigma$ of $B_{5}$.
We will prefer the Cauchy one line notation \citep{Wussing07} which
is shorter. We should understand the symmetry $\sigma$ of the figure
as the function:
\begin{equation}
\sigma(x_{1},x_{2},x_{3},x_{4},x_{5})=(-x_{2},x_{1},x_{5},-x_{4},x_{3}).
\end{equation}

\begin{figure}[H]
\begin{centering}
\includegraphics[width=7cm]{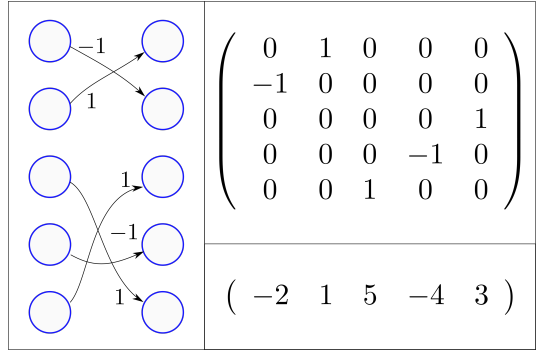}
\par\end{centering}
\caption{Different representations of an element $\sigma$ of $B_{5}$. Left:
graph; Top right: Matrix notation; Bottom right: Cauchy one line notation}
\label{fig:bn_repr1.png}
\end{figure}
Even if the matrix representation looks more intuitive, for efficiency
reasons, we use the Cauchy one line representation to compose the
symmetries. Let us consider again the function

\begin{equation}
f(x_{1},x_{2})\overset{(\ref{eq:fqdex})}{=}q_{0}+q_{1}x_{1}+q_{2}x_{2}+q_{3}x_{1}^{2}+q_{4}x_{1}x_{2}+q_{5}x_{2}^{2}.
\end{equation}
Take the symmetry
\[
x_{1}\rightarrow\varepsilon_{1}x_{1};x_{2}\rightarrow\varepsilon_{2}x_{1}
\]
where $\varepsilon_{i}\in\{-1,1\}$. With the Cauchy notation, this
transformation is denoted by $\varepsilon=(\varepsilon_{1},2\varepsilon_{2})$.
We have
\begin{equation}
\begin{array}{ccl}
f(\varepsilon\mathbf{x}) & = & q_{0}+q_{1}\varepsilon_{1}x_{1}+q_{2}\varepsilon_{2}x_{2}+q_{3}x_{1}^{2}+q_{4}\varepsilon_{1}\varepsilon_{2}x_{1}x_{2}+q_{5}x_{2}^{2}\end{array}
\end{equation}
As a consequence, for each symmetry $\varepsilon=(\varepsilon_{1},2\varepsilon_{2})$,
the pair
\begin{equation}
((\varepsilon_{1},2\varepsilon_{2}),(1,2\varepsilon_{1},3\varepsilon_{2},4,5\varepsilon_{1}\varepsilon_{2},6))
\end{equation}
is conjugate. We thus get the choice function $\psi$ \citep{jaulin:quotient:2023}:
\begin{equation}
\psi(\varepsilon_{1},2\varepsilon_{2})=(1,2\varepsilon_{1},3\varepsilon_{2},4,5\varepsilon_{1}\varepsilon_{2},6)\label{eq:psi}
\end{equation}
Given a symmetry $\varepsilon$, this choice function allows us to
get a symmetry $\sigma$ such that $(\varepsilon,\sigma)$ is a conjugate
pair. 

\section{Separator for the ellipse\label{sec:Separator}}

This section proposes an optimal separator for an ellipse. This operator
will be used later by a paver to compute boxes that are completely
inside or outside the ellipse. 

\subsection{Cardinal points}

We define the cardinal points as the points $(x_{1},x_{2})$ which
satisfies 
\[
\left\{ \begin{array}{c}
f(\mathbf{x})=0\\
\frac{\partial f}{\partial x_{1}}(\mathbf{x})=0\,\text{or}\,\frac{\partial f}{\partial x_{2}}(\mathbf{x})=0
\end{array}\right.
\]
Generically, there exist four cardinal points. The cardinal point,
painted red in Figure \ref{fig:ellipse0}, at the top (left, bottom,
right) is the North (West, South, East, respectively). 

\subsection{Contractor for the positive quadrant\label{subsec:positive:quadrant}}

The part of the ellipse between the North and the East is called the
\emph{positive arc} and is painted blue in Figure \ref{fig:ellipse0}.
The smallest box which encloses this arc is the \emph{positive quadrant}.

\begin{figure}[H]
\begin{centering}
\includegraphics[width=9cm]{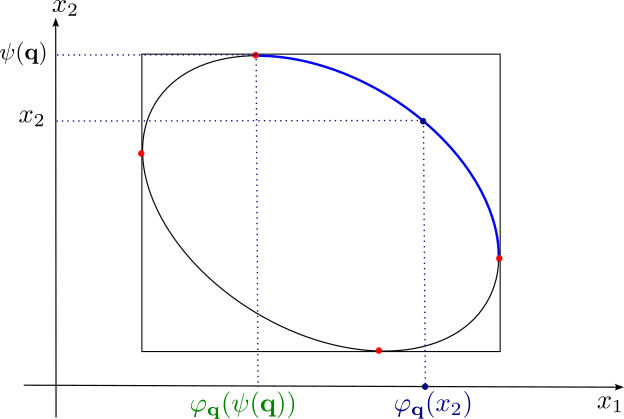}
\par\end{centering}
\caption{Positive arc and the corresponding function $\varphi_{\mathbf{q}}(x_{2})$ }
\label{fig:ellipse0}
\end{figure}

\begin{prop}
\label{prop:phiq}Take a point $x=(x_{1},x_{2})$ such that $f(\mathbf{x})=0$
of the positive quadrant. We have
\begin{equation}
\begin{array}{ccl}
x_{1} & = & \varphi_{\mathbf{q}}(x_{2})\\
 & = & \frac{-(q_{1}+q_{4}x_{2})\pm\sqrt{(q_{1}+q_{4}x_{2})^{2}-4q_{1}(q_{0}+q_{2}x_{2}+q_{5}x_{2}^{2})}}{2q_{3}}
\end{array}\label{eq:x1}
\end{equation}
The largest feasible $x_{2}$ is
\begin{equation}
\begin{array}{ccc}
x_{2} & = & \psi(\mathbf{q})\\
 & = & \frac{-4q_{3}q_{2}+2q_{1}q_{4}+\sqrt{(4q_{3}q_{2}-2q_{1}q_{4})^{2}-4(4q_{3}q_{5}-q_{4}^{2})(4q_{3}q_{0}-q_{1}^{2})}}{8q_{3}q_{5}-2q_{4}^{2}}
\end{array}\label{eq:X2}
\end{equation}
The North has the coordinates $\left(\varphi_{\mathbf{q}}(\psi(\mathbf{q})),\psi(\mathbf{q})\right)$.
\end{prop}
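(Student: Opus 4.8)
The plan is to treat the equation $f(\mathbf{x})=0$ as a quadratic in $x_1$ with coefficients depending on $x_2$, and then analyze when that quadratic has real solutions.

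First I would write $f(\mathbf{x}) = q_3 x_1^2 + (q_1 + q_4 x_2)\,x_1 + (q_0 + q_2 x_2 + q_5 x_2^2) = 0$, viewing it as $a x_1^2 + b x_1 + c = 0$ with $a = q_3$, $b = q_1 + q_4 x_2$, and $c = q_0 + q_2 x_2 + q_5 x_2^2$. Since we assumed the quadratic-form matrix has positive eigenvalues, $q_3 > 0$, so the leading coefficient never vanishes and the usual quadratic formula applies; solving gives exactly the expression for $\varphi_{\mathbf{q}}(x_2)$ in~(\ref{eq:x1}). This establishes the first claim: any point of the conic with a given $x_2$ has its $x_1$-coordinate among the (at most two) roots of this quadratic, so in particular on the positive quadrant $x_1 = \varphi_{\mathbf{q}}(x_2)$ for the appropriate sign choice.

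Next, for the largest feasible $x_2$: a point $(x_1,x_2)$ on the conic exists for a given $x_2$ if and only if the discriminant $\Delta(x_2) = b^2 - 4ac = (q_1+q_4 x_2)^2 - 4q_3(q_0+q_2 x_2+q_5 x_2^2) \ge 0$. Expanding, $\Delta(x_2)$ is itself a quadratic in $x_2$: the coefficient of $x_2^2$ is $q_4^2 - 4q_3 q_5$, the coefficient of $x_2$ is $2q_1 q_4 - 4q_3 q_2$, and the constant term is $q_1^2 - 4q_3 q_0$. Because the form matrix is positive definite, $4q_3 q_5 - q_4^2 > 0$, so $\Delta$ is a downward-opening parabola in $x_2$ and the feasible set of $x_2$-values is a bounded closed interval whose endpoints are the two roots of $\Delta(x_2)=0$. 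The largest feasible $x_2$ is the larger root, which by the quadratic formula (being careful with the sign of the leading coefficient $q_4^2 - 4q_3 q_5 < 0$) is precisely the expression $\psi(\mathbf{q})$ in~(\ref{eq:X2}). At this extreme value the discriminant vanishes, so there is a unique $x_1 = \varphi_{\mathbf{q}}(\psi(\mathbf{q}))$, and this double-root point is exactly where $\partial f/\partial x_1 = 2q_3 x_1 + q_1 + q_4 x_2 = 0$ — i.e. a cardinal point; since it is the top of the positive arc it is the North, with the stated coordinates.

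The main obstacle is purely bookkeeping rather than conceptual: one must verify that the $\pm$/sign conventions and the order of terms in~(\ref{eq:X2}) are the ones consistent with the positivity hypotheses ($q_3>0$, $4q_3q_5 - q_4^2 > 0$), so that $\psi(\mathbf{q})$ genuinely picks out the \emph{larger} root of $\Delta(x_2)=0$ and not the smaller, and similarly that the chosen branch of $\varphi_{\mathbf{q}}$ lands on the positive arc (between North and East) rather than elsewhere on the conic. I would also remark that the identification of this point as a cardinal point — and hence as North — follows because a vanishing discriminant of the $x_1$-quadratic is equivalent to $\partial f/\partial x_1 = 0$ at the solution, matching the defining system of the cardinal points.
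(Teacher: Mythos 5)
Your proof follows essentially the same route as the paper: view $f=0$ as a quadratic in $x_{1}$ with coefficients depending on $x_{2}$, apply the quadratic formula to get $\varphi_{\mathbf{q}}$, and then impose $\Delta\geq0$ as a quadratic condition in $x_{2}$ whose larger root gives $\psi(\mathbf{q})$. Your version is in fact slightly more careful than the paper's (you correctly obtain $-4q_{3}(q_{0}+q_{2}x_{2}+q_{5}x_{2}^{2})$ under the radical, where the stated formula (\ref{eq:x1}) has a typo $-4q_{1}$, and you track the sign of the leading coefficient $q_{4}^{2}-4q_{3}q_{5}<0$ when selecting the larger root), and your closing remark linking the vanishing discriminant to $\partial f/\partial x_{1}=0$ supplies the identification of the North as a cardinal point, which the paper leaves implicit.
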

\begin{proof}
Assume that $x_{2}$ is known. Let us compute the possible value for
$x_{1}$, if it exists. Since
\begin{equation}
f(x_{1},x_{2})\overset{(\ref{eq:fqdex})}{=}q_{3}x_{1}^{2}+\left(q_{1}+q_{4}x_{2}\right)x_{1}+q_{2}x_{2}+q_{0}+q_{5}x_{2}^{2},
\end{equation}
we get the following discriminant:
\begin{equation}
\Delta_{1}=b_{1}^{2}-4a_{1}c_{1}
\end{equation}
where
\begin{equation}
a_{1}=q_{3},\,b_{1}=q_{1}+q_{4}x_{2},\,c_{1}=q_{0}+q_{2}x_{2}+q_{5}x_{2}^{2}
\end{equation}
The two solutions are
\begin{equation}
x_{1}=\frac{-b_{1}\pm\sqrt{\Delta_{1}}}{2a_{1}}.
\end{equation}
We have thus proved corresponds (\ref{eq:x1}) .

A value for $x_{2}$ yields a feasible $x_{1}$ if $\Delta_{1}\geq0$,
\emph{i.e.},
\[
\begin{array}{clc}
 & b_{1}^{2}-4a_{1}c_{1} & \geq0\\
\Leftrightarrow & -(q_{1}+q_{4}x_{2})^{2}+4q_{3}(q_{0}+q_{2}x_{2}+q_{5}x_{2}^{2}) & \geq0\\
\Leftrightarrow & (4q_{3}q_{5}-q_{4}^{2})x_{2}^{2}+(4q_{3}q_{2}-2q_{1}q_{4})x_{2}+4q_{3}q_{0}-q_{1}^{2} & \leq0
\end{array}
\]
which is quadratic in $x_{2}.$ The discriminant is
\begin{equation}
\Delta_{2}=b_{2}^{2}-4a_{2}c_{2}
\end{equation}
where
\begin{equation}
\begin{array}{ccc}
a_{2} & = & 4q_{3}q_{5}-q_{4}^{2}\\
b_{2} & = & 4q_{3}q_{2}-2q_{1}q_{4}\\
c_{2} & = & 4q_{3}q_{0}-q_{1}^{2}
\end{array}
\end{equation}
We thus get \ref{eq:X2}.
\end{proof}
\begin{prop}
\label{prop:phi:psi}Take a point $x=(x_{1},x_{2})$ such that $f(\mathbf{x})=0$
of the positive quadrant. We have
\begin{equation}
x_{2}=\varphi_{\sigma(\mathbf{q})}(x_{1})\label{eq:x2}
\end{equation}
where $\sigma=(1,3,2,6,5,4])$. The largest feasible $x_{1}$ is
\begin{equation}
\begin{array}{ccc}
x_{1} & = & \psi(\sigma(\mathbf{q})).\end{array}\label{eq:X1}
\end{equation}
The East has the coordinates $\left(\psi(\sigma(\mathbf{q})),\varphi_{\sigma(\mathbf{q})}(\psi(\sigma(\mathbf{q})))\right)$.
\end{prop}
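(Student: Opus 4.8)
The plan is to derive this proposition from Proposition \ref{prop:phiq} by exploiting the invariance of the zero set of $f$ under the exchange of the two coordinates, instead of redoing the computation. First I would introduce the transposition $\tau\in B_{2}$ given by $\tau(x_{1},x_{2})=(x_{2},x_{1})$ and substitute $x_{1}\leftrightarrow x_{2}$ in (\ref{eq:fqdex}):
\begin{equation}
f(\mathbf{q},(x_{2},x_{1}))=q_{0}+q_{2}x_{1}+q_{1}x_{2}+q_{5}x_{1}^{2}+q_{4}x_{1}x_{2}+q_{3}x_{2}^{2}=f(\sigma(\mathbf{q}),(x_{1},x_{2})),
\end{equation}
where $\sigma=(1,3,2,6,5,4)$ is the permutation of the parameter indices that fixes $q_{0}$ and $q_{4}$, swaps $q_{1}$ with $q_{2}$, and swaps $q_{3}$ with $q_{5}$; note that $\sigma$ is an involution. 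Thus $(\sigma,\tau)$ is a conjugate pair for $f$, so $(x_{1},x_{2})$ satisfies $f(\mathbf{q},\mathbf{x})=0$ if and only if $(x_{2},x_{1})$ satisfies $f(\sigma(\mathbf{q}),\cdot)=0$. Moreover the symmetric matrix attached to $f(\sigma(\mathbf{q}),\cdot)$ is obtained from that of $f(\mathbf{q},\cdot)$ by conjugation with the orthogonal matrix of $\tau$, hence has the same eigenvalues, which are positive; so $\{f(\sigma(\mathbf{q}),\cdot)=0\}$ is again an ellipse and Proposition \ref{prop:phiq} applies to the parameter vector $\sigma(\mathbf{q})$.

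Next I would check that $\tau$ carries the positive quadrant of $\{f(\mathbf{q},\cdot)=0\}$ onto the positive quadrant of $\{f(\sigma(\mathbf{q}),\cdot)=0\}$. Being an axis-permuting isometry, $\tau$ interchanges the topmost and the rightmost extreme points of a bounded planar set, hence sends the North of $\{f(\mathbf{q},\cdot)=0\}$ to the East of $\{f(\sigma(\mathbf{q}),\cdot)=0\}$ and the East of the former to the North of the latter; consequently it maps the positive arc onto the positive arc and its smallest enclosing box onto the smallest enclosing box. It then suffices to read off the three conclusions of Proposition \ref{prop:phiq} for $\sigma(\mathbf{q})$ and transport them back through $\tau$. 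Applying (\ref{eq:x1}) to $\sigma(\mathbf{q})$ at the point $(x_{2},x_{1})$ gives $x_{2}=\varphi_{\sigma(\mathbf{q})}(x_{1})$, which is (\ref{eq:x2}). Applying (\ref{eq:X2}) shows that the largest feasible first coordinate on $\{f(\sigma(\mathbf{q}),\cdot)=0\}$ is $\psi(\sigma(\mathbf{q}))$; under $\tau$ this is the largest feasible $x_{1}$ on $\{f(\mathbf{q},\cdot)=0\}$, which is (\ref{eq:X1}). Finally, the North of $\{f(\sigma(\mathbf{q}),\cdot)=0\}$, located at $(\varphi_{\sigma(\mathbf{q})}(\psi(\sigma(\mathbf{q}))),\psi(\sigma(\mathbf{q})))$ by Proposition \ref{prop:phiq}, is the $\tau$-image of the East of $\{f(\mathbf{q},\cdot)=0\}$, so the East has the coordinates $(\psi(\sigma(\mathbf{q})),\varphi_{\sigma(\mathbf{q})}(\psi(\sigma(\mathbf{q}))))$.

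The algebra is immediate once this reduction is set up; the only delicate point is the bookkeeping of the cardinal-point and quadrant correspondence under $\tau$, namely checking that the point of maximal second coordinate for $\sigma(\mathbf{q})$ really does correspond to the East for $\mathbf{q}$, and that the positive arc is mapped to the positive arc rather than, say, to the arc between South and West. As an alternative to the matrix argument for the eigenvalue condition one may simply note that $\tau$ is an isometry of $\mathbb{R}^{2}$, so the image of an ellipse under $\tau$ is again an ellipse; I would keep whichever phrasing is shorter.
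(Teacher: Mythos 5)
Your proposal is correct and follows essentially the same route as the paper: identify the permutation $\sigma=(1,3,2,6,5,4)$ conjugate to the coordinate swap $x_{1}\leftrightarrow x_{2}$, verify $f(\mathbf{q},(x_{2},x_{1}))=f(\sigma(\mathbf{q}),(x_{1},x_{2}))$, and reduce to Proposition \ref{prop:phiq} applied to $\sigma(\mathbf{q})$. You merely spell out two points the paper leaves to a figure — that the ellipse hypothesis is preserved and that the swap exchanges North and East so the positive quadrant maps to the positive quadrant — which is a welcome tightening rather than a different argument.
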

\begin{proof}
The symmetry which permutes $x_{1},x_{2}$ is $\sigma=(1,3,2,6,5,4])$.
Indeed:
\begin{equation}
\begin{array}{ccc}
f(x_{2},x_{1}) & \overset{(\ref{eq:fqdex})}{=} & q_{0}+q_{1}x_{2}+q_{2}x_{1}+q_{3}x_{2}^{2}+q_{4}x_{1}x_{2}+q_{5}x_{1}^{2}\\
 & = & q_{0}+q_{2}x_{1}+q_{1}x_{2}+q_{5}x_{1}^{2}+q_{4}x_{1}x_{2}+q_{3}x_{2}^{2}
\end{array}
\end{equation}

After application of the symmetry, Proposition \ref{prop:phi:psi}
falls in the conditions of Proposition \ref{prop:phiq} (see Figure
\ref{fig:ellipse0cardinalreverse}).
\end{proof}
\begin{figure}[H]
\begin{centering}
\includegraphics[width=9cm]{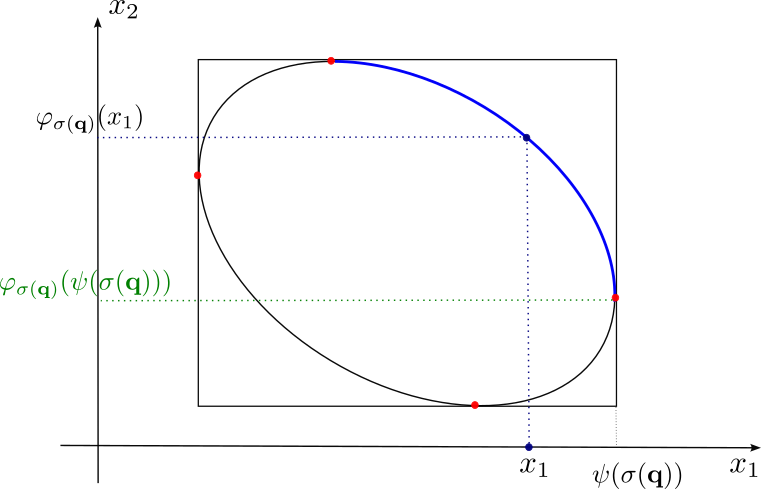}
\par\end{centering}
\caption{Positive arc and the corresponding function $\varphi_{\sigma(\mathbf{q})}$
after permutation}
\label{fig:ellipse0cardinalreverse}
\end{figure}

\begin{prop}
The smallest box which contains the North and the East is
\begin{equation}
[\mathbf{a}]=[\varphi_{\mathbf{q}}(\psi(\mathbf{q})),\psi(\sigma(\mathbf{q}))]\times[\varphi_{\sigma(\mathbf{q})}(\psi(\sigma(\mathbf{q}))),\psi(\mathbf{q})].
\end{equation}
\end{prop}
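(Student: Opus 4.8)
The plan is to reduce the statement to the two preceding propositions together with the elementary description of the smallest axis-aligned rectangle through two points. First I would recall that for points $(a_{1},a_{2})$ and $(b_{1},b_{2})$ the smallest box containing both is
\[
\left[\min(a_{1},b_{1}),\max(a_{1},b_{1})\right]\times\left[\min(a_{2},b_{2}),\max(a_{2},b_{2})\right].
\]
By Proposition~\ref{prop:phiq} the North is $N=\left(\varphi_{\mathbf{q}}(\psi(\mathbf{q})),\psi(\mathbf{q})\right)$, and by Proposition~\ref{prop:phi:psi} the East is $E=\left(\psi(\sigma(\mathbf{q})),\varphi_{\sigma(\mathbf{q})}(\psi(\sigma(\mathbf{q})))\right)$. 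Applying the formula above to $N$ and $E$, all that remains is to fix the order of the two first coordinates $\varphi_{\mathbf{q}}(\psi(\mathbf{q}))$ and $\psi(\sigma(\mathbf{q}))$, and of the two second coordinates $\psi(\mathbf{q})$ and $\varphi_{\sigma(\mathbf{q})}(\psi(\sigma(\mathbf{q})))$.

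For the ordering I would argue as follows. Both $N$ and $E$ are zeros of $f$ lying in the positive quadrant, since they are the two extremities of the positive arc. By Proposition~\ref{prop:phiq}, $\psi(\mathbf{q})$ is the largest value of $x_{2}$ for which $f(x_{1},x_{2})=0$ has a feasible solution --- it is precisely the largest root of the discriminant condition $\Delta_{1}\geq 0$ --- hence the second coordinate of $E$ satisfies $\varphi_{\sigma(\mathbf{q})}(\psi(\sigma(\mathbf{q})))\leq\psi(\mathbf{q})$. Symmetrically, by Proposition~\ref{prop:phi:psi} the number $\psi(\sigma(\mathbf{q}))$ is the largest feasible $x_{1}$, so the first coordinate of $N$ satisfies $\varphi_{\mathbf{q}}(\psi(\mathbf{q}))\leq\psi(\sigma(\mathbf{q}))$. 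Substituting these two inequalities, the $\min$ and $\max$ in the box formula collapse to the endpoints displayed in the statement, which is exactly $[\mathbf{a}]$.

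The only step that is not purely formal is this ordering claim, and the point to watch is that ``largest feasible $x_{2}$'' in Proposition~\ref{prop:phiq} is genuinely an upper bound for the second coordinate of \emph{every} zero of $f$ in the positive quadrant (in particular for $E$), and likewise ``largest feasible $x_{1}$'' bounds the first coordinate of the East; both facts are immediate from the derivation of~(\ref{eq:X2}), since $\Delta_{1}\geq 0$ is exactly the condition that a prescribed $x_{2}$ be attained on the curve $f=0$. In the non-generic case where $N$ and $E$ coincide, the box degenerates to that single point and the identity still holds trivially, so no separate treatment is needed.
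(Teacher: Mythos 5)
Your proof is correct and rests on the same geometric fact the paper relies on: the North realizes the maximal $x_{2}$ on the ellipse and the East the maximal $x_{1}$, so the interval hull of the two points has exactly the stated endpoints. The paper's own ``proof'' is only a pointer to a figure, so your version --- which justifies the ordering of the coordinates via the discriminant condition behind the ``largest feasible'' values in Propositions \ref{prop:phiq} and \ref{prop:phi:psi} --- actually supplies the argument the paper omits.
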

\begin{proof}
The result can be read directly from Figure \ref{fig:ellipse0_hull}.
\end{proof}
\begin{figure}[H]
\begin{centering}
\includegraphics[width=9cm]{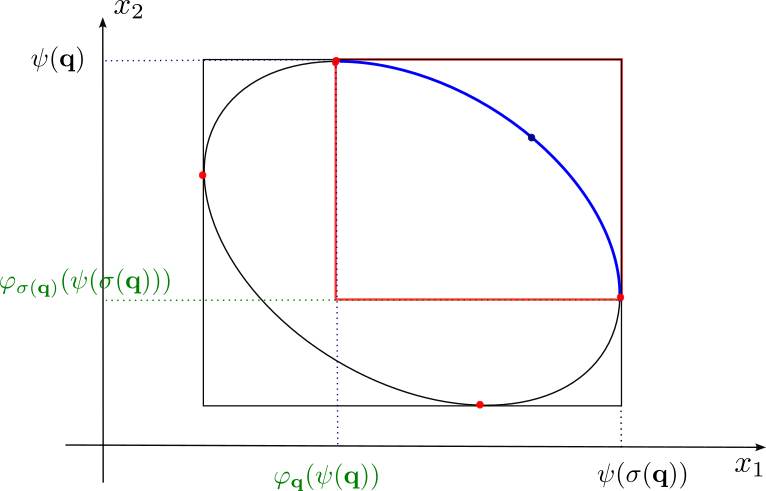}
\par\end{centering}
\caption{Smallest box which encloses the North and the East}
\label{fig:ellipse0_hull}
\end{figure}

\begin{prop}
The minimal contractor associated to the positive ellipse is 
\begin{equation}
C_{0}^{\mathbf{q}}([\mathbf{x}])=[\mathbf{x}]\cap\left(\begin{array}{c}
[\varphi_{\mathbf{q}}(b_{2}^{+}),\varphi_{\mathbf{q}}(b_{2}^{-})]\\{}
[\varphi_{\sigma(\mathbf{q})}(b_{1}^{+}),\varphi_{\sigma(\mathbf{q})}(b_{1}^{-})]
\end{array}\right)
\end{equation}
with $[\mathbf{b}]=[\mathbf{x}]\cap[\mathbf{a}].$
\end{prop}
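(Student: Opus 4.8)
The plan is to unfold the definition of a minimal contractor and then reduce everything to the two explicit monotone branches $\varphi_{\mathbf q}$ and $\varphi_{\sigma(\mathbf q)}$ of Propositions \ref{prop:phiq} and \ref{prop:phi:psi}. Recall that a contractor $C$ for a set $\mathbb S$ is minimal when $C([\mathbf x])$ is the smallest box containing $[\mathbf x]\cap\mathbb S$ for every box $[\mathbf x]$; here $\mathbb S$ is the positive arc $\{\mathbf x : f(\mathbf x)=0\}\cap[\mathbf a]$. Since $\mathbb S\subseteq[\mathbf a]$ we have $[\mathbf x]\cap\mathbb S=[\mathbf b]\cap\mathbb S$ with $[\mathbf b]=[\mathbf x]\cap[\mathbf a]$, which is why the statement introduces $[\mathbf b]$; I write $[b_1]=[b_1^-,b_1^+]$ and $[b_2]=[b_2^-,b_2^+]$ for its components.

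The key geometric ingredient, which I would establish first, is that — with the branch of the $\pm$ chosen as on the positive arc — $\varphi_{\mathbf q}$ is a continuous, strictly decreasing bijection of $[a_2]$ onto $[a_1]$, that $\varphi_{\sigma(\mathbf q)}$ is its inverse, and that the endpoints are the cardinal points: $\varphi_{\mathbf q}(a_2^+)=a_1^-$ is the North and $\varphi_{\mathbf q}(a_2^-)=a_1^+$ is the East (read off from Propositions \ref{prop:phiq} and \ref{prop:phi:psi}). On the open arc neither $\tfrac{\partial f}{\partial x_1}$ nor $\tfrac{\partial f}{\partial x_2}$ vanishes, for otherwise there would be a cardinal point strictly between the North and the East; hence by the implicit function theorem the arc is a smooth graph $x_1=\varphi_{\mathbf q}(x_2)$ whose derivative $\varphi_{\mathbf q}'=-\tfrac{\partial f/\partial x_2}{\partial f/\partial x_1}$ has constant sign, continuity extends to the closed interval through formula~(\ref{eq:x1}), and comparing the endpoint values forces that sign to be negative. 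The same argument after the permutation $\sigma$ handles $\varphi_{\sigma(\mathbf q)}$, and mutual inversion follows because both functions describe one and the same arc.

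Given this, the rest is bookkeeping with monotone functions and interval intersections. On the positive arc a point with $f=0$ reads $(\varphi_{\mathbf q}(x_2),x_2)$, so the $x_1$-range of $[\mathbf b]\cap\mathbb S$ is $\{x_1\in[b_1]:\varphi_{\sigma(\mathbf q)}(x_1)\in[b_2]\}$; being the intersection of the interval $[b_1]$ with the preimage of the interval $[b_2]$ under the continuous decreasing map $\varphi_{\sigma(\mathbf q)}$, it equals $[b_1]\cap[\varphi_{\mathbf q}(b_2^+),\varphi_{\mathbf q}(b_2^-)]$ (here I use that $\varphi_{\mathbf q}$ and $\varphi_{\sigma(\mathbf q)}$ are decreasing and mutually inverse). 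Symmetrically the $x_2$-range is $[b_2]\cap[\varphi_{\sigma(\mathbf q)}(b_1^+),\varphi_{\sigma(\mathbf q)}(b_1^-)]$, and applying $\varphi_{\sigma(\mathbf q)}$ to the endpoints of the first range shows the two are simultaneously empty or nonempty, so the smallest box containing $[\mathbf b]\cap\mathbb S$ is their product. Finally, since $b_2^\pm\in[a_2]$ and $\varphi_{\mathbf q}([a_2])=[a_1]$, the interval $[\varphi_{\mathbf q}(b_2^+),\varphi_{\mathbf q}(b_2^-)]$ lies inside $[a_1]$; as $[b_1]=[x_1]\cap[a_1]$ this yields $[b_1]\cap[\varphi_{\mathbf q}(b_2^+),\varphi_{\mathbf q}(b_2^-)]=[x_1]\cap[\varphi_{\mathbf q}(b_2^+),\varphi_{\mathbf q}(b_2^-)]$, and likewise for the second component, turning the hull into exactly $[\mathbf x]\cap\bigl([\varphi_{\mathbf q}(b_2^+),\varphi_{\mathbf q}(b_2^-)]\times[\varphi_{\sigma(\mathbf q)}(b_1^+),\varphi_{\sigma(\mathbf q)}(b_1^-)]\bigr)=C_0^{\mathbf q}([\mathbf x])$.

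I expect the only genuine obstacle to be the monotonicity and bijectivity claim of the second paragraph: this is the one place that uses the informally stated genericity of the four cardinal points (and the convexity of the ellipse), and one must be careful that a single fixed choice of branch in $\varphi_{\mathbf q}$ works uniformly over all of $[a_2]$ and meets the cardinal points at the two ends — including the degeneracy that the implicit function theorem fails at the endpoints (the ellipse is horizontal at the North and vertical at the East), so that monotonicity on the closed interval must be obtained from the open arc by continuity rather than from the derivative at the ends. Everything downstream of that is elementary.
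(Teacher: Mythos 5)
Your argument is correct and is essentially the paper's own proof written out in full: the paper dispatches this proposition with the single observation that it follows from the monotonicity of $\varphi_{\mathbf{q}}$, and your second and third paragraphs supply exactly the justification of that monotonicity (via the absence of cardinal points on the open arc) and the interval bookkeeping that the paper leaves implicit. No discrepancy in approach, just a much more detailed account.
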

\begin{proof}
This is a direct consequence of the monotonicity of the partial function
$\varphi_{\mathbf{q}}$.
\end{proof}
If we apply this contractor in a paver with $\mathbf{q}=(-5,1,1,3,1,2)$,
we get Figure \ref{fig: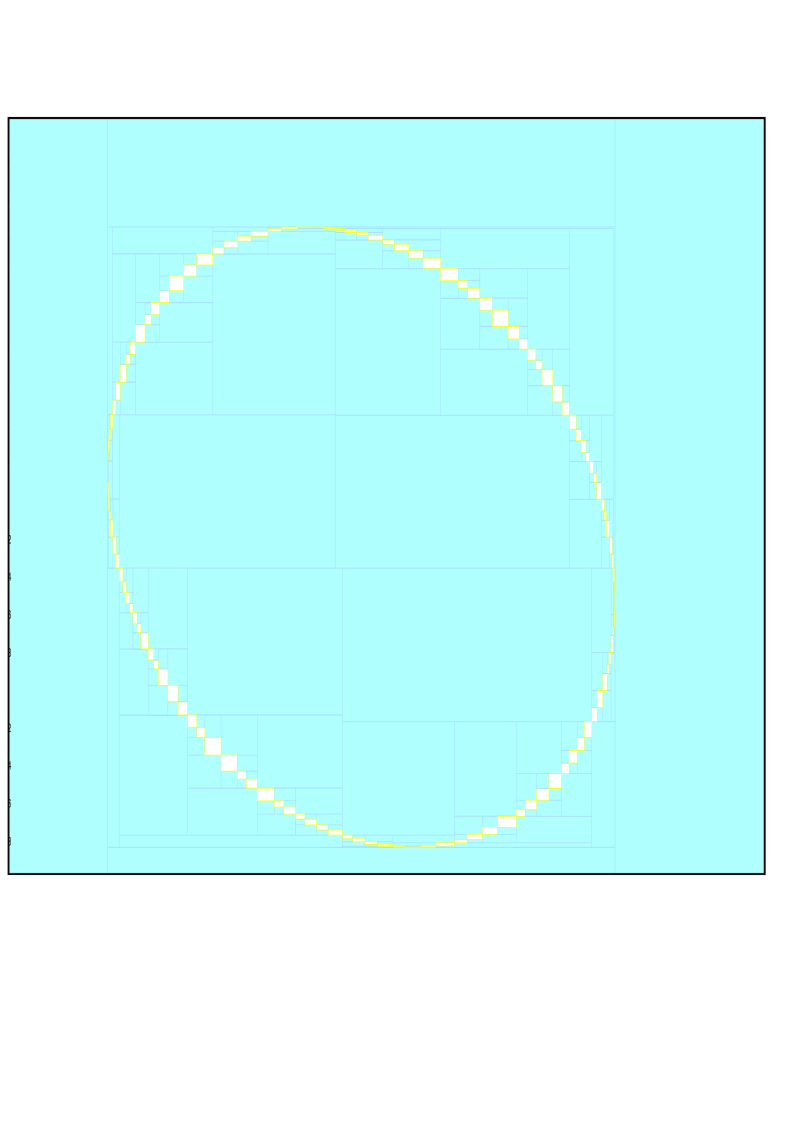}.
\begin{figure}[H]
\begin{centering}
\includegraphics[width=8cm]{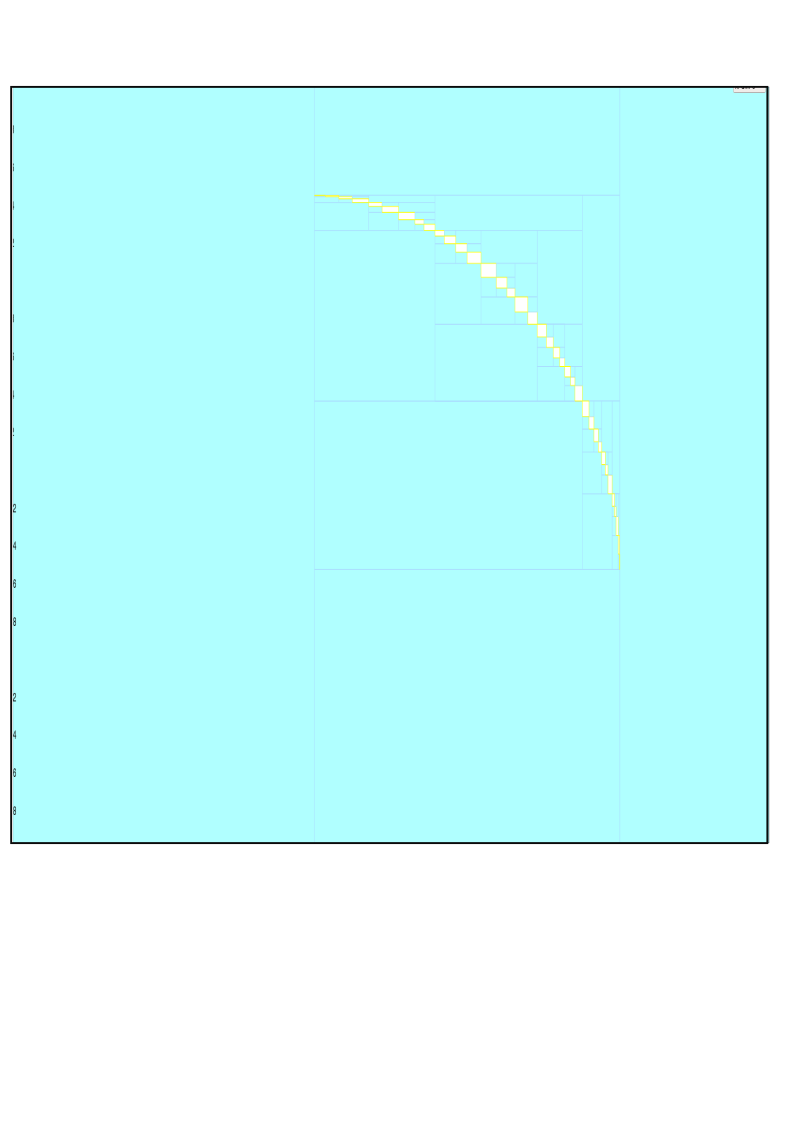}
\par\end{centering}
\caption{Illustration of the application of the contractor for the ellipse
on the positive quadrant}
\label{fig:sivia_ell1.png}
\end{figure}

\subsection{Contractor for the ellipse boundary}

Subsection \ref{subsec:positive:quadrant} has shown how to build
a contractor for the North-East quadrant of the ellipse. Recall that
$C_{0}^{\mathbf{q}}([\mathbf{x}])$ contracts the box $[\mathbf{x}]$
with respect to the positive quadrant of the ellipse. It depend on
the parameter vector $\mathbf{q}$ of the ellipse. Using the notion
of contractor action \citep{jaulin:IA:hyperoctohedral}, we show how
we can extend this contractor $C_{0}^{\mathbf{q}}$ to other quadrants.
We recall that the action of a symmetry $\varepsilon$ to the contractor
$C$ is defined by
\[
\varepsilon\bullet C([\mathbf{x}])=\varepsilon\circ C\circ\varepsilon^{-1}([\mathbf{x}]).
\]
This means that $\varepsilon\bullet C$ is a contractor that has been
built from the contractor $C$ as follows:
\begin{itemize}
\item Apply to the box $[\mathbf{x}]$ the symmetry $\varepsilon^{-1}$
\item Apply the contractor $C$
\item Apply to the resulting box $C\circ\varepsilon^{-1}([\mathbf{x}])$
the symmetry $\varepsilon$.
\end{itemize}
If we consider the pair $(\varepsilon,\sigma)$ conjugate with respect
to the ellipse, the contractor $\varepsilon\bullet C_{0}^{\sigma(\mathbf{q})}$
is associated to another quadrant of the ellipse. The selection of
the symmetries $(\varepsilon,\sigma)$ to be selected is made using
the choice function (\ref{eq:psi}). In the ellipse case, we clearly
understand geometrically that 4 symmetries are needed since the ellipse
has 4 quadrants (North-East, North-West, South-West, South-East).
These symmetries can be computed automatically as shown in \citep{jaulin:IA:hyperoctohedral}.

The contractor for the ellipse boundary is thus
\begin{equation}
[\mathbf{x}]\mapsto\bigcup_{^{\varepsilon\in\{(1,2),(1,-2),(-1,2),(-1,-2)\}}}(\varepsilon\bullet C_{0}^{\psi_{\varepsilon}(\mathbf{q})})([\mathbf{x}]).
\end{equation}
The application of this contractor is illustrated by Figure \ref{fig:sivia_ell1}.

\begin{figure}[H]
\begin{centering}
\includegraphics[width=8cm]{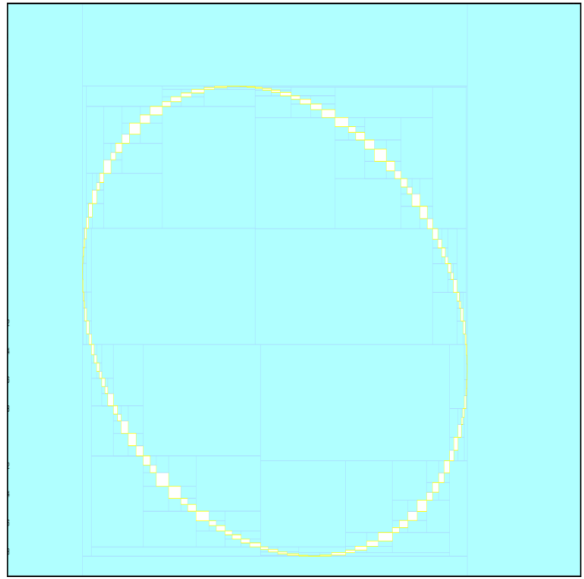}
\par\end{centering}
\caption{Illustration of the application of the contractor for the ellipse
on all quadrants}
\label{fig:sivia_ell1}
\end{figure}

\subsection{Separator for the ellipse}

From a contractor on the boundary of a set $\mathbb{X}$ and a test
for $\mathbb{X}$, we can obtain a separator. As a consequence, we
can get an inner and an outer approximations for $\mathbb{X}$ as
illustrated by Figure \ref{fig:sivia_ell2}.

\begin{figure}[H]
\begin{centering}
\includegraphics[width=8cm]{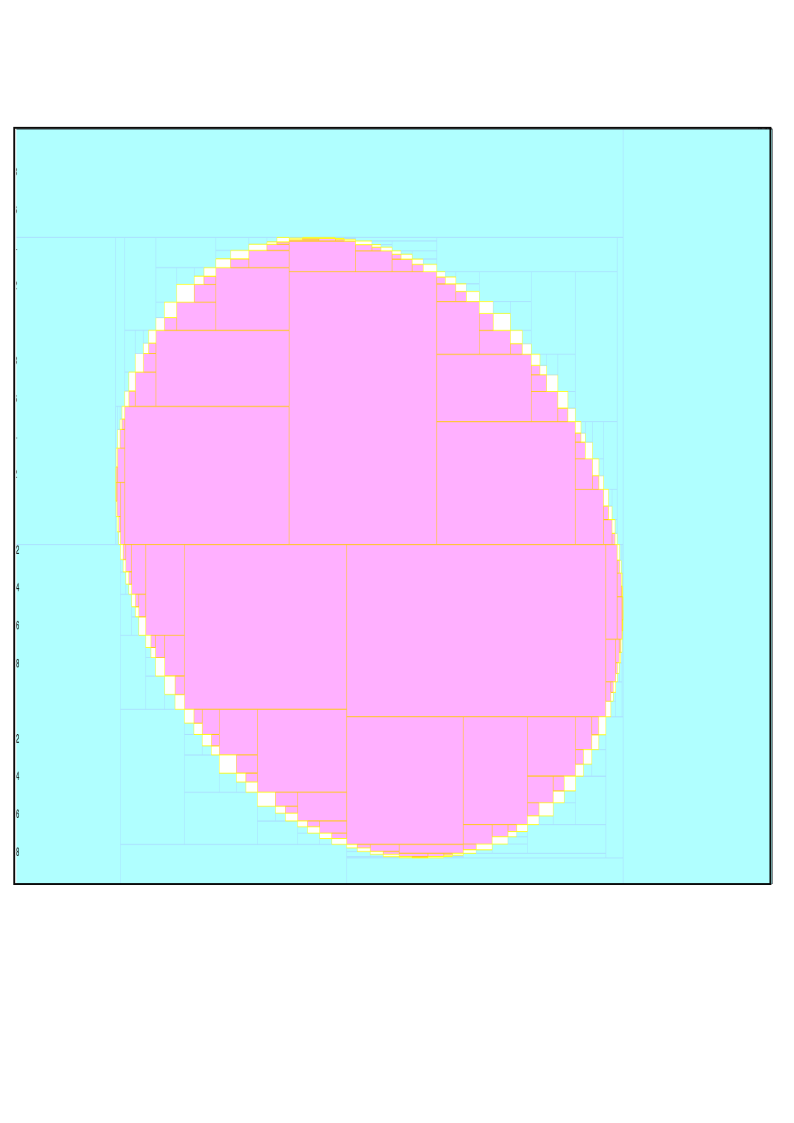}
\par\end{centering}
\caption{Illustration of the application of the separator for the ellipse on
all quadrants}
\label{fig:sivia_ell2}
\end{figure}

If we compare with a classical forward-backward contractor \citep{Ben99}
(see \ref{fig:sivia_ell3}) of other contractors such as \citep{DBLP:Araya}
our contractor yields a more accurate approximation.

\begin{figure}[H]
\begin{centering}
\includegraphics[width=8cm]{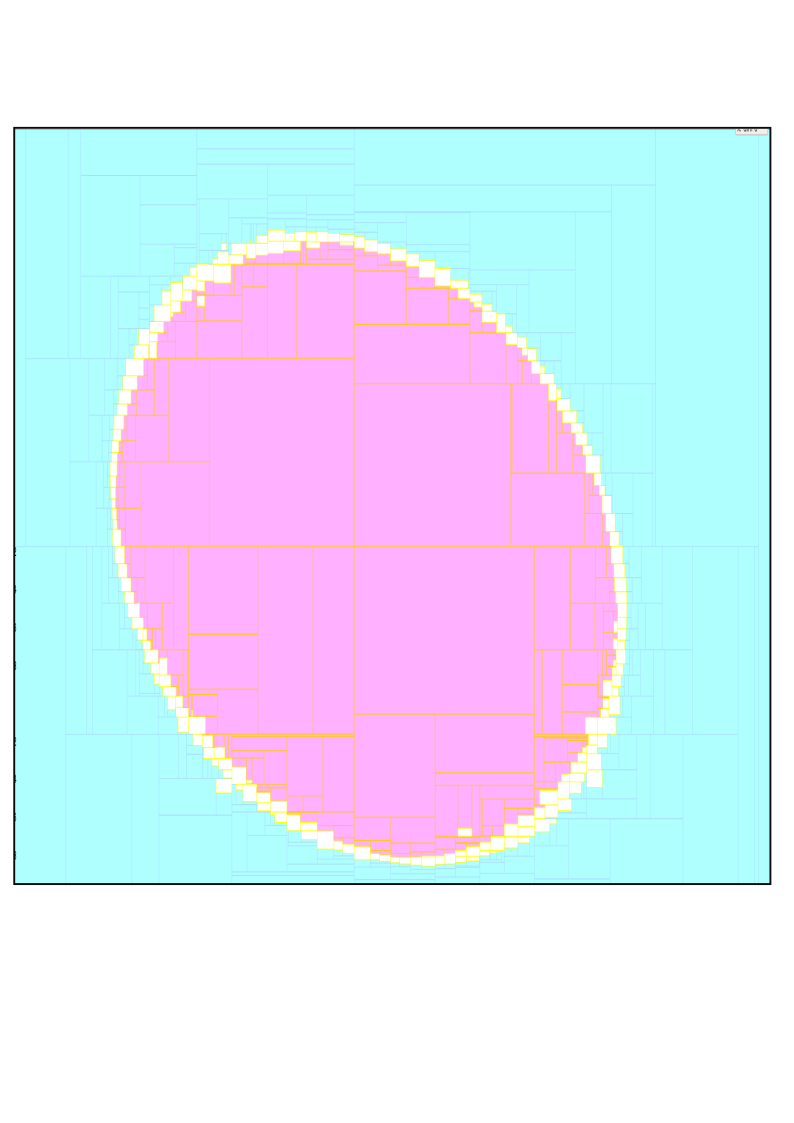}
\par\end{centering}
\caption{Classical interval contractors require a larger number of bisections }
\label{fig:sivia_ell3}
\end{figure}
\textbf{Remark}. We have assumed that we had no uncertainties on $\mathbf{q}$.
In case of interval uncertainty, the set to be characterized becomes
\begin{equation}
\mathbb{X}=\{\mathbf{x}|\exists\mathbf{q}\in[\mathbf{q}],q_{0}+q_{1}x_{1}+q_{2}x_{2}+q_{3}x_{1}^{2}+q_{4}x_{1}x_{2}+q_{5}x_{2}^{2}\leq0\}.
\end{equation}

The resolution is still possible as shown in \citep{jaulin:quotient:2023}.

\section{Application\label{sec:Application}}

Interval methods have been used for localization of robots for several
decades \citep{JaulinBook01}\citep{robloc}\citep{colle_galerne13}\citep{Drevelle:Bonnifait:09}.
This section proposes to deal with a specific localization problem
where the sum of distances are measured. 

\subsection{Ellipse}
\begin{prop}
\label{prop:ellipse:foci}Consider two points $\mathbf{a},\mathbf{b}$
of the plane. The set $\mathbb{X}$ of all points such that 
\begin{equation}
\|\mathbf{x}-\mathbf{a}\|+\|\mathbf{x}-\mathbf{b}\|\leq\ell
\end{equation}
 is an ellipse with foci points $\mathbf{a},\mathbf{b}$. The set
$\mathbb{X}$ is defined by the inequality
\begin{equation}
\mathbf{f}_{\mathbf{a},\mathbf{b},\ell}(\mathbf{x})\leq0
\end{equation}
where
\begin{equation}
\mathbf{f}_{\mathbf{a},\mathbf{b},\ell}(\mathbf{x})=q_{0}+q_{1}x_{1}+q_{2}x_{2}+q_{3}x_{1}^{2}+q_{4}x_{1}x_{2}+q_{5}x_{2}^{2}
\end{equation}
with
\[
\begin{array}{ccl}
q_{0} & =- & a_{1}^{4}-2a_{1}^{2}a_{2}^{2}+2a_{1}^{2}b_{1}^{2}+2a_{1}^{2}b_{2}^{2}+2a_{1}^{2}\ell^{2}\\
 &  & -a_{2}^{4}+2a_{2}^{2}b_{1}^{2}+2a_{2}^{2}b_{2}^{2}\\
 &  & +2a_{2}^{2}\ell^{2}-b_{1}^{4}-2b_{1}^{2}b_{2}^{2}+2b_{1}^{2}\ell^{2}-b_{2}^{4}+2b_{2}^{2}\ell^{2}-\ell^{4}\\
q_{1} & = & 4a_{1}^{3}-4a_{1}^{2}b_{1}+4a_{1}a_{2}^{2}-4a_{1}b_{1}^{2}-4a_{1}b_{2}^{2}\\
 &  & -4a_{1}\ell^{2}-4a_{2}^{2}b_{1}+4b_{1}^{3}+4b_{1}b_{2}^{2}-4b_{1}\ell^{2}\\
q_{2} & = & 4a_{1}^{2}a_{2}-4a_{1}^{2}b_{2}+4a_{2}^{3}-4a_{2}^{2}b_{2}-4a_{2}b_{1}^{2}\\
 &  & -4a_{2}b_{2}^{2}-4a_{2}\ell^{2}+4b_{1}^{2}b_{2}+4b_{2}^{3}-4b_{2}\ell^{2}\\
q_{3} & = & -4a_{1}^{2}+8a_{1}b_{1}-4b_{1}^{2}+4\ell^{2}\\
q_{4} & = & -8a_{1}a_{2}+8a_{1}b_{2}+8a_{2}b_{1}-8b_{1}b_{2}\\
q_{5} & = & -4a_{2}^{2}+8a_{2}b_{2}-4b_{2}^{2}+4\ell^{2}
\end{array}
\]
 
\end{prop}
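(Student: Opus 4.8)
The plan is to turn the two-radical inequality into a polynomial one by squaring twice, to identify the resulting (in fact quadratic) polynomial with $\mathbf f_{\mathbf a,\mathbf b,\ell}$, and then to argue that the large inequality $\mathbf f_{\mathbf a,\mathbf b,\ell}(\mathbf x)\le 0$ cuts out exactly the filled ellipse. Write $r_{\mathbf a}=\|\mathbf x-\mathbf a\|$ and $r_{\mathbf b}=\|\mathbf x-\mathbf b\|$, so $\mathbb X=\{r_{\mathbf a}+r_{\mathbf b}\le\ell\}$; its boundary is, classically, the ellipse $E=\{r_{\mathbf a}+r_{\mathbf b}=\ell\}$ with foci $\mathbf a,\mathbf b$, which is nonempty and nondegenerate precisely when $\ell>\|\mathbf a-\mathbf b\|$, and we work in that regime.

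First I would isolate one radical, $r_{\mathbf a}\le\ell-r_{\mathbf b}$. On $\mathbb X$ the right-hand side is $\ge r_{\mathbf a}\ge 0$, so squaring is legitimate and gives $r_{\mathbf a}^2\le\ell^2-2\ell r_{\mathbf b}+r_{\mathbf b}^2$, i.e.
\[
2\ell r_{\mathbf b}\ \le\ \ell^2+r_{\mathbf b}^2-r_{\mathbf a}^2 .
\]
The key observation is that the quadratic parts of $r_{\mathbf a}^2$ and $r_{\mathbf b}^2$ coincide, so the right-hand side is an \emph{affine} function of $\mathbf x$,
\[
A(\mathbf x):=\ell^2+r_{\mathbf b}^2-r_{\mathbf a}^2=\ell^2+\|\mathbf b\|^2-\|\mathbf a\|^2+2(a_1-b_1)x_1+2(a_2-b_2)x_2 .
\]
Since its left-hand side $2\ell r_{\mathbf b}\ge 0$, on $\mathbb X$ we have $A(\mathbf x)\ge 0$, so a second squaring is again legitimate and yields the polynomial inequality $4\ell^2 r_{\mathbf b}^2\le A(\mathbf x)^2$, that is,
\[
\mathbf f_{\mathbf a,\mathbf b,\ell}(\mathbf x):=4\ell^2\bigl((x_1-b_1)^2+(x_2-b_2)^2\bigr)-A(\mathbf x)^2\ \le\ 0 .
\]
Because $A$ is affine, $A^2$ is quadratic in $\mathbf x$ and no quartic terms ever appear; expanding $4\ell^2 r_{\mathbf b}^2-A(\mathbf x)^2$ and collecting the coefficients of $1,x_1,x_2,x_1^2,x_1x_2,x_2^2$ is then a routine computation (conveniently checked with a computer algebra system) that reproduces exactly the displayed $q_0,\dots,q_5$.

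It remains to promote the forward implication $\mathbb X\subseteq\{\mathbf f_{\mathbf a,\mathbf b,\ell}\le 0\}$ to an equality. Writing $\mathbf d=\mathbf a-\mathbf b$, the quadratic part of $\mathbf f_{\mathbf a,\mathbf b,\ell}$ has matrix $\left(\begin{smallmatrix}q_3&q_4/2\\ q_4/2&q_5\end{smallmatrix}\right)$ with $q_3=4(\ell^2-d_1^2)$, $q_5=4(\ell^2-d_2^2)$, $q_4=-8d_1d_2$, hence determinant $16\ell^2(\ell^2-\|\mathbf d\|^2)$ and trace $8\ell^2-4\|\mathbf d\|^2$, both strictly positive when $\ell>\|\mathbf a-\mathbf b\|$; thus the matrix is positive definite (the hypothesis used throughout the paper), $\{\mathbf f_{\mathbf a,\mathbf b,\ell}\le 0\}$ is a filled ellipse, and $\mathbf f_{\mathbf a,\mathbf b,\ell}(\mathbf x)\to+\infty$ as $\|\mathbf x\|\to\infty$. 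On the other hand the conic $\{\mathbf f_{\mathbf a,\mathbf b,\ell}=0\}$ contains $E$ (every point of $E$ makes $r_{\mathbf a}=\ell-r_{\mathbf b}\ge 0$, so both squarings are tight), and since two distinct conics meet in finitely many points, $\{\mathbf f_{\mathbf a,\mathbf b,\ell}=0\}=E$. The complement of $E$ has a bounded component (the interior, on which $\mathbf f_{\mathbf a,\mathbf b,\ell}\le 0$ by the forward implication together with continuity) and an unbounded one (on which $\mathbf f_{\mathbf a,\mathbf b,\ell}>0$ by the growth at infinity), so $\{\mathbf f_{\mathbf a,\mathbf b,\ell}\le 0\}$ is exactly the closed interior of $E$, namely $\mathbb X$.

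The only real obstacle is bookkeeping: one must keep track of the sign conditions so that each squaring is valid where it is used, and one must carry the moderately large degree-two expansion of $4\ell^2 r_{\mathbf b}^2-A(\mathbf x)^2$ far enough to match all six coefficients, signs included. There is nothing conceptually deep; organizing the computation around the affine form $A(\mathbf x)$ above keeps it manageable.
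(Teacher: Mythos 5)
Your proof is correct and follows essentially the same route as the paper: eliminate the radicals by squaring and identify the coefficients of the resulting quadratic polynomial; indeed your $4\ell^{2}r_{\mathbf{b}}^{2}-A(\mathbf{x})^{2}$ is identically equal to the paper's $4r_{\mathbf{a}}^{2}r_{\mathbf{b}}^{2}-(\ell^{2}-r_{\mathbf{a}}^{2}-r_{\mathbf{b}}^{2})^{2}$, since both reduce to the symmetric form $2(st+Ls+Lt)-s^{2}-t^{2}-L^{2}$ with $s=r_{\mathbf{a}}^{2}$, $t=r_{\mathbf{b}}^{2}$, $L=\ell^{2}$. You are in fact more thorough than the paper, which treats only the equality case and omits the sign conditions legitimizing each squaring, the positive definiteness of the quadratic part, and the converse inclusion $\{\mathbf{f}_{\mathbf{a},\mathbf{b},\ell}\leq0\}\subseteq\mathbb{X}$.
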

\begin{proof}
We have
\begin{equation}
\begin{array}{cl}
 & \|\mathbf{x}-\mathbf{a}\|+\|\mathbf{x}-\mathbf{b}\|=\ell\\
\Leftrightarrow & \sqrt{(x_{1}-a_{1})^{2}+(x_{2}-a_{2})^{2}}+\sqrt{(x_{1}-b_{1})^{2}+(x_{2}-b_{2})^{2}}=\ell
\end{array}
\end{equation}
After some trivial symbolic calculus, we get to get rid of the square
root to get
\begin{equation}
\begin{array}{ccc}
4\left((x_{1}-a_{1})^{2}+(x_{2}-a_{2})^{2}\right)\left((x_{1}-b_{1})^{2}+(x_{2}-b_{2})^{2}\right)\\
-\left(\ell^{2}-(x_{1}-a_{1})^{2}-(x_{2}-a_{2})^{2}-(x_{1}-b_{1})^{2}-(x_{2}-b_{2})^{2}\right)^{2} & = & 0
\end{array}
\end{equation}
We can develop the expression to get the coefficients of the proposition. 
\end{proof}

\subsection{Localization}

We consider an example related to localization which can be seen as
special case of interval data fitting problem \citep{Kreinovich:Shary:16}.
Consider three sonars located at points $\mathbf{a}:(-2,1),\mathbf{b}:(-2,-1),\mathbf{c}:(3,2)$
of the plane. The emitter $\mathbf{a}$ sends a sound which is reflected
by an object at position $\mathbf{x}$ received by $\mathbf{b}$ and
$\mathbf{c}$ (see Figure \ref{ellipseloc0}). From the time of flight
of the sound we want to estimate the position of the object. 

\begin{figure}[H]
\begin{centering}
\includegraphics[width=5cm]{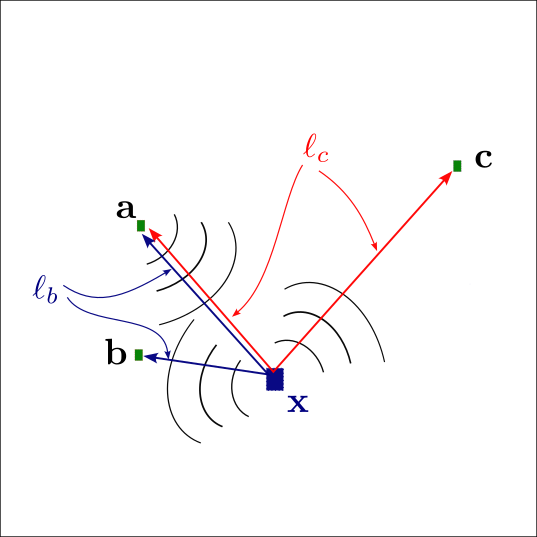}
\par\end{centering}
\caption{The sonar system returns the two distances $\ell_{b}$ and $\ell_{c}$}
\label{ellipseloc0}
\end{figure}

We assume that we were able to collect two distance intervals such
that $\ell_{b}\in[4,6]$ and $\ell_{c}\in[7,9]$. The solution set
$\mathbb{X}$ is defined by 
\begin{equation}
\begin{array}{ccccc}
\text{(i)} & \, & \|\mathbf{x}-\mathbf{a}\|+\|\mathbf{x}-\mathbf{b}\| & = & \ell_{b}\in[4,6]\\
\text{(ii)} &  & \|\mathbf{x}-\mathbf{a}\|+\|\mathbf{x}-\mathbf{c}\| & = & \ell_{c}\in[7,9]
\end{array}\label{eq:inequality:X}
\end{equation}
From Proposition \ref{prop:ellipse:foci}, we get that $\mathbb{X}$
is defined by
\begin{equation}
\mathbb{X}:\left\{ \begin{array}{c}
\mathbf{f}_{\mathbf{a},\mathbf{b},6}(\mathbf{x})\leq0\\
\mathbf{f}_{\mathbf{a},\mathbf{b},4}(\mathbf{x})\geq0\\
\mathbf{f}_{\mathbf{a},\mathbf{c},9}(\mathbf{x})\leq0\\
\mathbf{f}_{\mathbf{a},\mathbf{c},7}(\mathbf{x})\geq0
\end{array}\right.
\end{equation}
Using a paver, we are thus able to get in inner and an outer approximations
for the set of $\mathbb{X}$ (see Figure \ref{fig:ellipseloc3}).The
frame box is $[-7,7]\times[-7,7].$ Figure \ref{ellipseloc1} represents
the inequality (\ref{ellipseloc1},i) and Figure \ref{fig:ellipseloc2}
correspond to the inequality (\ref{ellipseloc1},ii). All results
are guaranteed since outward rounding is implemented \citep{revol2022testing}.

\begin{figure}[H]
\begin{centering}
\includegraphics[width=8cm]{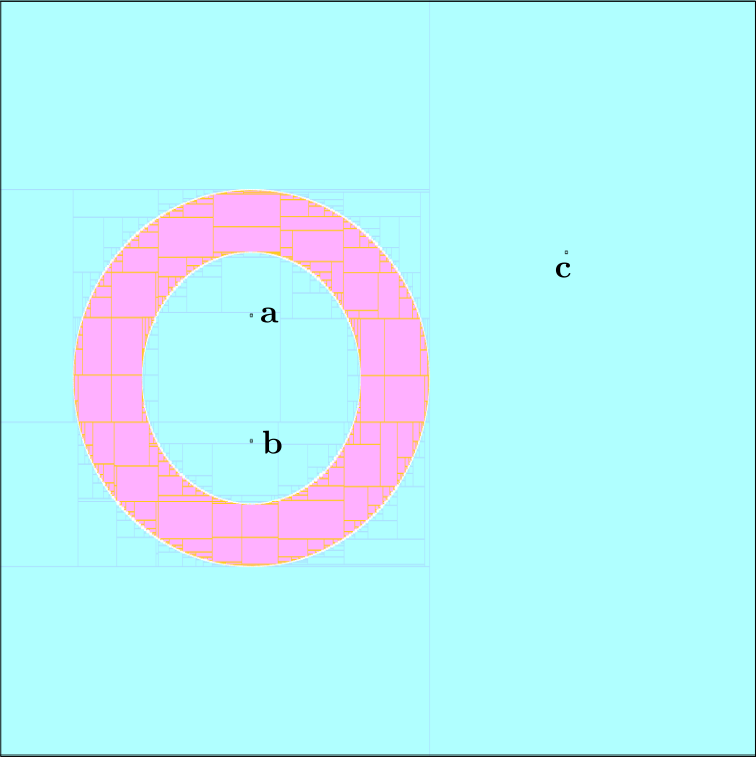}
\par\end{centering}
\caption{Set of positions consistent with the path $\mathbf{a},\mathbf{b}$}
\label{ellipseloc1}
\end{figure}

\begin{figure}[H]
\begin{centering}
\includegraphics[width=8cm]{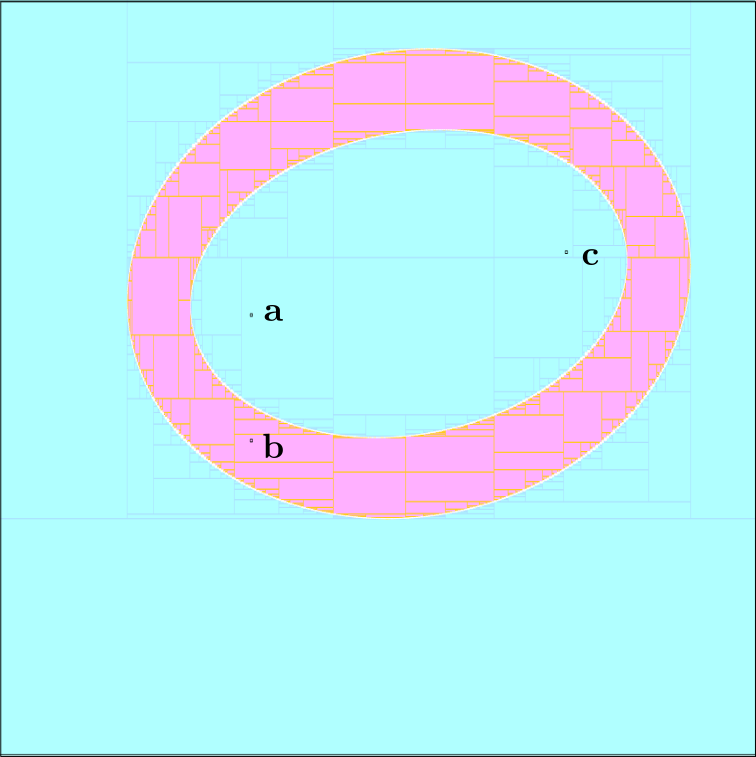}
\par\end{centering}
\caption{Set of positions consistent with the path $\mathbf{a},\mathbf{c}$}
\label{fig:ellipseloc2}
\end{figure}
\begin{figure}[H]
\begin{centering}
\includegraphics[width=8cm]{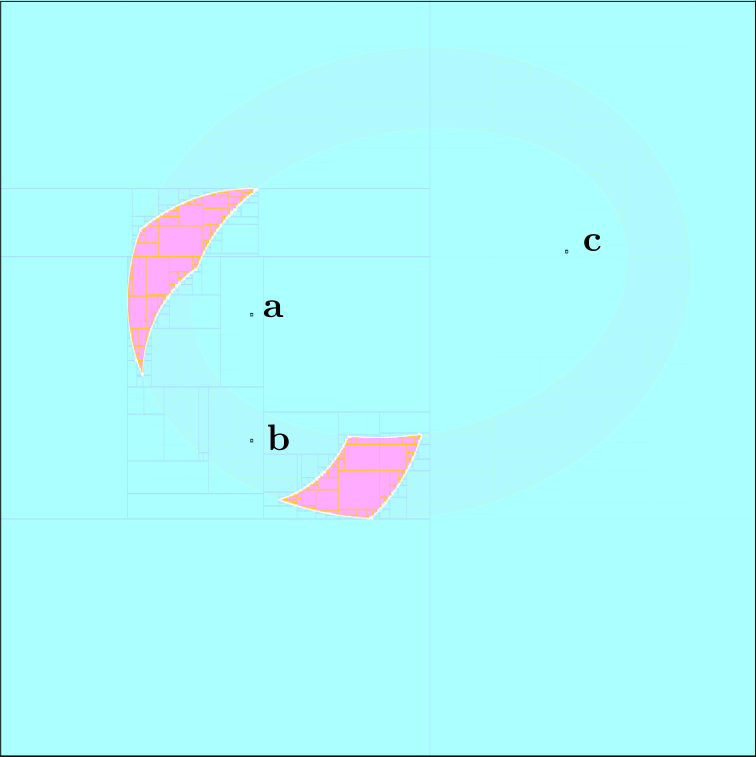}
\par\end{centering}
\caption{Set of positions consistent with the path $\mathbf{a},\mathbf{b}$
and the path $\mathbf{a},\mathbf{c}$}
\label{fig:ellipseloc3}
\end{figure}

\section{Conclusion\label{sec:Conclusion}}

This paper has proposed a minimal contractor and a minimal separator
for an ellipse of the plane. The notion of actions derived from hyperoctahedral
symmetries allowed us to limit the analysis in on part of the constraint
where the monotonicity can be assumed. The symmetries was used to
extend the analysis to the whole plane. 

The goal of this paper was to provide a simple example which illustrates
how to use the hyperoctahedral symmetries in order to build minimal
separators. Now, as shown in \citep{jaulin:quotient:2023}, the use
of these symmetries is more interesting when we deal with projection
problems where quantifier elimination is needed. This type of projection
problem is indeed much more difficult to solve with classical interval
approaches \citep{ratschan}.

The Python code based on Codac \citep{codac} is given in\citep{jaulin:code:ctcellipse:23}.

\bibliographystyle{plain}

\end{document}